\documentclass[a4paper,11pt]{article}
\usepackage[scale={0.8,0.9},centering,includeheadfoot]{geometry}
\usepackage{amssymb,amsmath, graphicx,epsfig,amscd,array,color,url,subfigure}
\usepackage{amsthm}
\usepackage[round]{natbib}
\newtheorem{theorem}{Theorem}

\newtheorem{proposition}{Proposition}

 \newcommand{\norm}[1]{\lVert#1\rVert}

 \def\mm#1{\ensuremath{\boldsymbol{#1}}} % version: amsmath

\begin{document}

\title{Think continuous: Markovian  Gaussian models in spatial statistics }
\author{Daniel Simpson\footnote{Corresponding author. Email: \texttt{Daniel.Simpson@math.ntnu.no}}, Finn Lindgren \& H\aa{}vard
    Rue\\
    Department of Mathematical Sciences\\
    Norwegian University of Science and Technology\\
    N-7491 Trondheim, Norway} \date{\today}

\maketitle

\begin{abstract}
Gaussian Markov random fields (GMRFs) are frequently used as computationally efficient models in spatial statistics.  Unfortunately, it has traditionally been difficult to link GMRFs with the more traditional Gaussian random field models as the  Markov property is difficult to deploy in continuous space.  Following the pioneering work of \citet{Lindgren2011}, we expound on the link between Markovian Gaussian random fields and GMRFs.  In particular, we discuss the theoretical and practical aspects of fast computation with continuously specified Markovian Gaussian random fields, as well as the clear advantages they offer in terms of clear, parsimonious and interpretable models of anisotropy and non-stationarity. 
\end{abstract}

\section{Introduction}

From a practical viewpoint, the primary difficulty with spatial Gaussian models in applied statistics is dimension, which typically scales with the number of observations.  Computationally speaking, this is a disaster!  It is, however, not a disaster unique to spatial statistics.  Time series models, for example, can suffer from the same problems.  In the temporal case, the ballooning dimensionality is typically tamed by adding a conditional independence, or \emph{Markovian}, structure to the model.  The key advantage of the Markov property for time series models is that the computational burden then grows only linearly (rather than cubically) in the dimension, which makes inference on these models feasible for long time series.  

Despite its success in time series modelling, the Markov property has had a less exalted role in spatial statistics. Almost all instances where the Markov property has been used in spatial modelling has been in the form of Markov random fields defined over a set of discrete locations connected by a graph.  The most common Markov random field  models are \emph{Gaussian} Markov random fields (GMRFs), in which the value of the random field at the nodes is jointly Gaussian \citep{book80}.  GMRFs are typically written as $$
\mm{x} \sim N(\mm{\mu}, \mm{Q}^{-1}), 
$$ where $\mm{Q}$ is the \emph{precision matrix} and the Markov property is equivalent to requiring that $\mm{Q}$ is sparse, that is $Q_{ij} = 0 $ if{f} $x_i$ and $x_j$ are conditionally independent  \citep{book80}.

As problems in spatial statistics are usually concerned with inferring a spatially continuous effect over a domain of interest, it is difficult to directly apply the fundamentally discrete GMRFs.  For this reason, it is commonly stated that there are two essential fields in spatial statistics: the one that uses GMRFs and the one that uses continuously indexed Gaussian random fields.  In a recent read paper, \citet{Lindgren2011} showed that these two approaches are not distinct.  By carefully utilising the continuous space Markov property, it is possible to construct Gaussian random fields for which all quantities of interest can be computed using GMRFs!

The most exciting aspect of the Markovian models of \citet{Lindgren2011} is their flexibility.  There is no barrier---conceptual or computational---to extending them to construct non-stationary, anisotropic Gaussian random fields. Furthermore, it is even possible to construct them on the sphere and other manifolds.  In fact, \citet{SimpsonLGCP} showed that there is essentially no computational difference between inferring a log-Gaussian Cox process on a rectangular observation window and inferring one on a non-convex, multiply connected region on the sphere!  This type of flexibility is not found in any other method for constructing Gaussian random field models.

In this paper we carefully review the connections between GMRFs,
Gaussian random fields, the spatial Markov property and deterministic approximation theory.  It is hoped that this will give the interested reader some insight into the theory and practice of Markovian Gaussian random fields.  In Section \ref{sec:GMRF} we briefly review the practical computational properties of GMRFs.  Section \ref{sec:continuous} we take a detailed tour of the theory of Markovian  Gaussian random fields.  We begin with a discussion of the spatial Markov property and show how it naturally leads to differential operators.  We then present a practical method for approximating Markovian Gaussian random fields and discuss what is mean by a continuous approximation.  In particular, we show that deterministic approximation theory can provide essential insights into the behaviour of these approximations.  We then discuss some practical issues with choosing sets of basis functions before discussing extensions of the models.  Finally we mention some further extensions of the method.

\section{Practical computing with Gaussian Markov random fields} \label{sec:GMRF}

Gaussian Markov random fields possess two pleasant properties that make them useful for spatial problems: they facilitate fast computation for large problems, and they are quite stable with respect to conditioning.  In this section we will explore these two  properties in the context of spatial statistics.

\subsection{Fast computations with Gaussian Markov random fields}
As in the temporal setting, the Markovian property allows for fast computation of samples, likelihoods and other quantities of interest \citep{book80}.    This allows the investigation of much larger models than would be available using general multivariate Gaussian models. The situation is not, however, as good as it is in the one dimensional case, where all of these quantities can be computed using $\mathcal{O}(n)$ operations, where $n$ is the dimension of the GMRF.  Instead, for the two dimensional spatial models, samples and likelihoods can be computed in $\mathcal{O}(n^{3/2})$ operations, which is still a significant saving on the $\mathcal{O}(n^3)$ operations required for a general Gaussian model.  A quick order calculation shows that computing a sample from an $\tilde{n}$--dimensional Gaussian random vector without any special structure takes the same amount of time as computing a sample from GMRF of dimension $n = \tilde{n}^2$!

The key object when computing with GMRFs is the Cholesky decomposition $\mm{Q} = \mm{LL}^T$, where $\mm{L}$ is a lower triangular matrix.  When $\mm{Q}$ is sparse, its Cholesky decomposition can be computed very efficiently \citep[see, for instance,][]{book102}.  Once the Cholesky triangle has been computed, it is easy to show that $\mm{x} = \mm{\mu} + \mm{L}^{-T}\mm{z}$ is a sample from the GMRF $\mm{x} \sim N(\mm{\mu},\mm{Q}^{-1})$ where $\mm{z} \sim N(\mm{0},\mm{I})$.  Similarly, the log density for a GMRF can be computed as $$
\log \pi(\mm{x}) = -\frac{n}{2} \log(2\pi) + \sum_{i=1}^n \log L_{ii} - \frac{1}{2} (\mm{x} - \mm{\mu})^T \mm{Q}(\mm{x} - \mm{\mu}),
$$ where $L_{ii}$ is the $i$th diagonal element of $\mm{L}$ and the inner product $ (\mm{x} - \mm{\mu})^T \mm{Q}(\mm{x} - \mm{\mu})$ can be computed in $\mathcal{O}(n)$ calculations using the sparsity of $\mm{Q}$.  It is also possible to use the Cholesky triangle $\mm{L}$ to compute $\operatorname{diag}(\mm{Q}^{-1})$, which are the marginal variances of the GMRF \citep{art375}.

Furthermore, it is possible to sample from $\mm{x}$ conditioned on a \emph{small number}  of linear constraints $\mm{x} | \mm{Bx}=\mm{b}$, where $\mm{B}\in \mathbb{R}^{k\times n}$ is usually a dense matrix and the number of constraints,  $k$, is very small.  This occurs, for instance, when the GMRF is constrained to sum to zero.  However, if one wishes to sample conditional on data, which usually corresponds to a \emph{large number} of linear constraints, the methods in the next section are almost always significantly more efficient.    While direct calculation of the conditional density is possible,  when $\mm{B}$ is a dense matrix conditioning destroys the Markov structure of the problem.  It is still, however, possible to sample efficiently using a technique known as \emph{conditioning by Kriging} \citep{book80}, whereby an unconditional sample $\mm{x}$ is drawn from $N(\mm{\mu},\mm{Q}^{-1})$ and then corrected using the equation $$
\mm{x}^* = \mm{x} - \mm{Q}^{-1}\mm{B}^T(\mm{BQ}^{-1}\mm{B}^T)^{-1}(\mm{Bx} - \mm{b}).
$$ When $k $ is small, the conditioning by Kriging update can be computed efficiently from the Cholesky factorisation.  We reiterate, however, that when  there are a large number of constraints, the conditioning by Kriging method will be inefficient, and, if $\mm{B}$ is sparse (as is the case when conditioning on data), it is usually better use the methods in the next subsection.   

An alternative method for conditional sampling can be constructed by noting that the conditioning by Kriging update $\mm{x}^* = \mm{x} - \mm{\delta x}$ can be computed by solving the augmented system
\begin{equation} \label{kkt}
\begin{pmatrix} {\mm{Q}} & \mm{B}^T\\ \mm{B}&\mm{0}\end{pmatrix}\begin{pmatrix}\mm{\delta x}\\ \mm{y}\end{pmatrix} = \begin{pmatrix}\mm{0}\\ \mm{Bx} - \mm{b}\end{pmatrix},
\end{equation}
where $\mm{y}$ is an auxiliary variable.  A standard application of Sylvester's inertia theorem \citep[Theorem 8.1.17 in][]{book2} shows that the matrix in \eqref{kkt} is not positive definite, however the system can still be solved using ordinary direct (or iterative) methods \citep{Simpson2008b}.  The augmented system \eqref{kkt} reveals the geometric structure of conditioning by Kriging: augmented systems of equations of this form arise from the Karush-Kuhn-Tucker conditions in constrained optimisation  \citep{benzi_golub}.  In particular, the conditional sample solves the minimisation problem 
\begin{align*}
\mm{x}^* = &\arg \min_{\mm{x}^* \in \mathbb{R}^n} \frac{1}{2}\norm{\mm{x}^* - \mm{x}}_Q^2 \\
&\text{subject to } \mm{Bx}^* = \mm{b},
\end{align*}
where $\mm{x}$ is the unconditional sample and $\norm{\mm{y}}_Q^2 =
\mm{y}^T \mm{Qy}$.  That is, the conditional sample is the closest
vector that satisfies the linear constraint to the unconditional
sample when the distance is measured in the natural norm induced by the GMRF.

The methods in this section are all predicated on the computation of a Cholesky factorisation.  However, for genuinely large problems, it may be impossible to compute or store the Cholesky factor.  With this in mind, a suite of methods were developed by \citet{Simpson2007} based on modern iterative methods for solving sparse linear systems.  These methods have shown some promise for large problems \citep{Strickland2010,Aune2011}, although more work is needed on approximating the log density \citep{Simpsonthesis,Aune2011ISI}.

\subsection{The effect of conditioning: fast Bayesian inference} \label{sec:conditioning}

The second appealing property of GMRFs is that they behave well under conditioning.  The discussion in this section is intimately tied to discretely specified GMRFs, however we will see in Section \ref{sec:A_matrix} that the formulation below, and especially the matrix $\mm{A}$, has an important role to play in the continuous setting.   Consider the simple Bayesian hierarchical model 
\begin{subequations} \label{hierachy1}
\begin{align}
 \mm{y} | \mm{x} &\sim N( \mm{Ax}, \mm{Q}_y^{-1}) \\
 \mm{x} & \sim N(\mm{\mu}, \mm{Q}_x^{-1}),
\end{align}
\end{subequations}
where $\mm{A}$, $\mm{Q}_x$ and $\mm{Q}_y$ are sparse matrices.  A simple manipulation shows that $(\mm{x}^T, \mm{y}^T)^T$ is jointly a Gaussian Markov random field with joint precision matrix 
\begin{equation} \label{joint_prec}
\mm{Q}_{xy} = \begin{pmatrix} 
 \mm{Q}_x + \mm{A}^T\mm{Q}_y\mm{A} & - \mm{A}^T\mm{Q}_y \\
 -\mm{Q}_y\mm{A} & \mm{Q}_y
\end{pmatrix}
\end{equation}
and the mean defined implicitly through the equation $$
\mm{Q}_{xy} \mm{\mu}_{xy} = \begin{pmatrix} \mm{Q}_x \mm{\mu} \\ \mm{0} \end{pmatrix}.
$$  As $(\mm{x}^T,\mm{y}^T)^T$ is jointly a GMRF, it is easy to see \citep[see, for example,][]{book80} that 
\begin{equation} \label{x_given_y}
 \mm{x} | \mm{y} \sim N \left( \mm{\mu} +   (\mm{Q}_x +
   \mm{A}^T\mm{Q}_y\mm{A})^{-1} \mm{A}^T\mm{Q}_y (\mm{y} -\mm{A\mu}) ,  (\mm{Q}_x + \mm{A}^T\mm{Q}_y\mm{A})^{-1} \right).
\end{equation}
It is important to note that the precision matrices for the joint  \eqref{joint_prec} and the conditional \eqref{x_given_y} distributions are only sparse---and the corresponding fields are only GMRFs---if $\mm{A}$ is sparse.  This observation directly links the structure of the matrix $\mm{A}$ to the availability of efficient inference methods and will be important in the coming sections.

For practical problems in spatial statistics, the model \eqref{hierachy1} is not enough: there will be unknown parameters in both the likelihood and the latent field.  If we group the unknown parameters into a vector $\mm{\theta}$, we get the following hierarchical model
\begin{subequations} \label{hierachy2}
\begin{alignat}{2}
 &\mm{y} | \mm{x},\mm{\theta} &&\sim N( \mm{Ax}, \mm{Q}_y(\mm{\theta})^{-1}) \label{observation} \\
 &\mm{x} | \mm{\theta}  &&\sim N(\mm{\mu}, \mm{Q}_x(\mm{\theta})^{-1}) \\
&\mm{\theta} &&\sim \pi(\mm{\theta}).
\end{alignat}
\end{subequations}
In order to perform inference on \eqref{hierachy2}, it is common to use Markov chain Monte Carlo (MCMC) methods for sampling from the posterior $\pi(\mm{x},\mm{\theta} | \mm{y})$, however, this is not necessary.  It's an easy exercise in Gaussian density manipulation to show that the marginal posterior for the parameters, denoted $\pi(\mm{\theta} | \mm{y})$ can be computed without integration and is given by $$
\pi(\mm{\theta} | \mm{y}) \propto \left.\frac{ \pi(\mm{x},\mm{y}|\mm{\theta}) \pi(\mm{\theta})}{\pi(\mm{x}|\mm{y},\mm{\theta})}\right|_{\mm{x} = \mm{x}^*},
$$ where $\mm{x}^*$ can be any point, but is typically taken to be the conditional mode $E(\mm{x} | \mm{y},\mm{\theta})$,  and the corresponding marginals $\pi(\theta_j| \mm{y})$ can be computed using numerical integration.  Similarly, the marginals $\pi(x_i | \mm{y})$ can be computed using numerical integration and the observation that, for every $\mm{\theta}$, $\pi(\mm{x},\mm{y} | \mm{\theta})$ is a GMRF \citep{art375,art451}.  

It follows that, for models with Gaussian observations, it is possible to perform \emph{deterministic} inference that is exact up to the error in the numerical integration.  In particular, if there are only a moderate number of parameters, this will be extremely fast.  For non-Gaussian observation processes, exact deterministic inference is no longer possible, however, \citet{art451} showed that it is possible to construct extremely accurate approximate inference schemes by cleverly deploying a series of Laplace approximation.  The integrated nested Laplace approximation (INLA) has been used successfully on a large number of spatial problems \citep[see, for example,][]{art460,art494,Schrodle2011,Riebler2011,Cameletti2011,illianalb:11} and a user friendly \texttt{R} interface is available from \url{http://r-inla.org}.

\section{Continuously specified, Markovian Gaussian random fields} \label{sec:continuous}

One of the primary aims of spatial statistics is to infer a \emph{spatially continuous} surface $x(s)$ over the region of interest.  It is, therefore, necessary to build probability distributions over the space of functions, and the standard way of doing this is to construct Gaussian random fields, which are the generalisation to functions of multivariate Gaussian distributions in the sense that for any collection of points $(s_1,s_2,\ldots, s_p)^T$, the field evaluated at those points is jointly Gaussian.  In particular $\mm{x} \equiv (x(s_1), x(s_2), \dots, x(s_p))^T \sim N (\mm{\mu}, \mm{\Sigma})$, where the covariance matrix is given by $\Sigma_{ij} = c(s_i,s_j)$ for some positive definite covariance function $c(\cdot,\cdot)$.   In most commonly used cases, the covariance function is non-zero everywhere and, as a result, $\mm{\Sigma}$ is a dense matrix. 

It is clear that we would like to transfer some of the pleasant computational properties of GMRFs, which are outlined above, to the Gaussian random field setting.  The obvious barrier to this is that classical GMRF models are strongly tied to discrete sets of points, such as  graphs \citep{book80} and lattices \citep{art122}.  Throughout this section, we will discuss the recent work of \citet{Lindgren2011} that has broken down the barrier between GMRFs and spatially continuous Gaussian random field models.

\subsection{The spatial Markov property}

\begin{figure}[tp]
  \centering
	\includegraphics[width=0.5\textwidth]{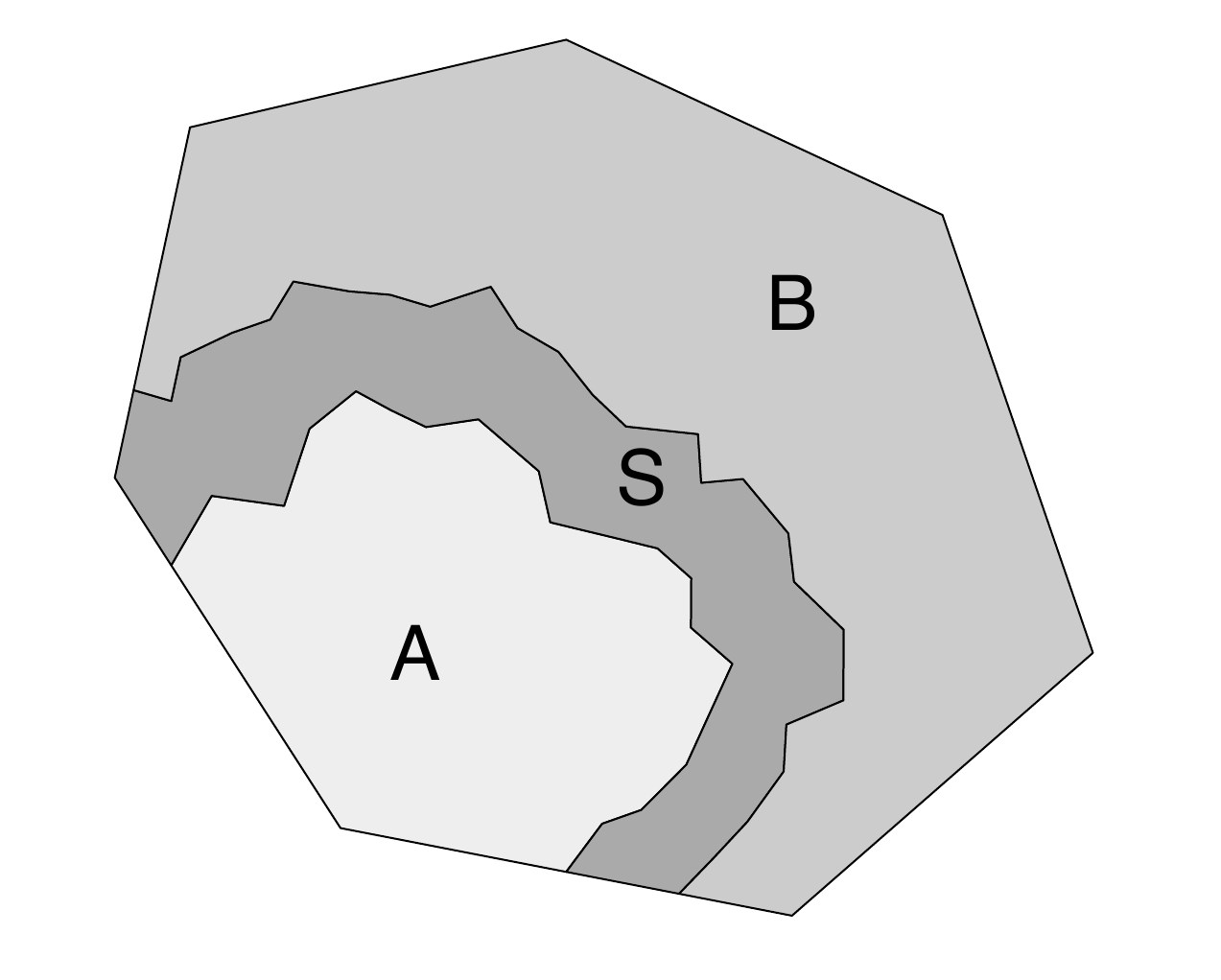}
	\caption{An illustration of the spatial Markov property.  If, for any appropriate set $S$, $\{x(s): s\in A)\}$ is independent of  $\{x(s): s\in B)\}$ given  $\{x(s): s\in S)\}$, then the field $x(s)$ has the spatial Markov property. \label{spatial_markov}}
\end{figure}

For temporal processes, defining the Markov property is greatly simplified by the structure of time: its directional nature and the clear distinction between past, present and future allow for a very natural discussion of neighbourhoods.  Unfortunately, space is far less structured and, as such the Markov property is harder to define exactly.  Intuitively, however, the definition is generalised in an the obvious way and is demonstrated by Figure \ref{spatial_markov}.  Informally, a Gaussian random field $x(s)$ has the spatial Markov property if, for every appropriate set $S$ separating $A$ and $B$,  the values of $x(s)$ in $A$ are conditionally independent of the values in $B$ given the values in $S$.   A formal definition of the spatial Markov property can be found in \citet{Rozanov1977}.

It is not immediately obvious how the spatial Markov property can be used for computational inference.  However, in an almost completely ignored paper, \citet{Rozanov1977} provided the vital characterisation of Markovian Gaussian random fields in terms of their power spectra.  The power spectrum of a stationary Gaussian random field is defined as the Fourier transform of its covariance function $c(\mm{h})$, that is $$
R(\mm{k}) \equiv \frac{1}{(2\pi)^d} \int_{\mathbb{R}^d} \exp(-i \mm{k}^T\mm{h}) c(\mm{h}).
$$  Rozanov showed that a stationary field is Markovian if and only if $R(\mm{k}) = {1}/{p(\mm{k})}$, where $p(\mm{k})$ is a  positive, symmetric polynomial.  

\subsection{From spectra to differential operators}

While Rozanov's characterisation of stationary Markovian random fields in terms of their power spectra is elegant, it is not obvious how we can turn it into a useful computational tool.  The link comes from Fourier theory: there is a one-to-one correspondence between polynomials in the frequency space and differential operators.  To see this, define the covariance operator $C$ of the Markovian Gaussian random field as the convolution 
\begin{align*}
C [f(\cdot)](\mm{h}) & = \int_{\mathbb{R}^d} c(\mm{h}' - \mm{s} ) f(\mm{h}')\,d\mm{h}' \\
 &= \int_{\mathbb{R}^d}\exp(i \mm{k}^T \mm{h}) \frac{\hat{f}(\mm{k})}{p(\mm{k})} \, d\mm{k},
\end{align*} where $p(\mm{k}) \equiv 1/R(\mm{k}) = \sum_{|\mm{i}| \leq
  \ell} a_i \mm{k}^{\mm{i}}$ is a $d$--variate, positive, symmetric
polynomial of degree $\ell$; $\mm{i} = (i_1,\ldots,i_d)^T \in
\mathbb{N}^d$ is a multi-index, meaning that $\mm{k}^{\mm{i}} =
\prod_{l=1}^d k_l^{i_l}$ and $|\mm{i}| = \sum_{l=1}^d i_l$; $f(\cdot)$ is a smooth function that goes to zero rapidly at infinity; and $\hat{f}(\mm{k})$ is the Fourier transform of $f(\mm{h})$. It follows from Fourier theory that the covariance operator $C$ has an inverse, which we will call the \emph{precision} operator $Q$, and it is given by
\begin{align*}
Q[f(\cdot)](\mm{h}) \equiv C^{-1} [f(\cdot)](\mm{h}) &= \int_{\mathbb{R}^d}\exp(i \mm{k}^T \mm{h}) \hat{f}(\mm{k})p(\mm{k}) \, d\mm{k} \\
&=\sum_{|\mm{i}| \leq \ell} a_{\mm{i}} D^{\mm{i}}f(h),
\end{align*}
where $D^{\mm{l}} = i^{|\mm{l}|}\frac{\partial^{|\mm{l}|}}{\partial h_1^{l_1}\partial h_2^{l_2}\cdot \partial h_d^{l_d}}$ are the appropriate multivariate derivatives.  

The following proposition summarises the previous discussion and specialises the result to isotropic fields.

\begin{proposition} \label{prop:markov}
A stationary Gaussian random field $x(s)$ defined on $\mathbb{R}^d$ is Markovian if and only if its covariance operator $C[f(\cdot)]$ has an inverse of the form $$
Q[f(\cdot)](\mm{h}) = \sum_{|\mm{i}| \leq \ell} a_{\mm{i}} D^{\mm{i}}f(h),
$$
where $a_{\mm{i}}$ are the coefficients of a real, symmetric polynomial $p(\mm{k}) = \sum_{|\mm{i}| \leq \ell} a_{\mm{i}} \mm{k}^{\mm{i}}$.  Furthermore, $x(s)$ is isotropic if and only if
$$
Q [f(\cdot)](\mm{h})  = \sum_{i = 0}^\ell \tilde{a}_i (-\Delta)^i f(h),
$$ where $\Delta = \sum_{i=1}^d \frac{\partial^2}{\partial h_i^2}$ is the $d$-dimensional Laplacian and $\tilde{p}(t)=\sum_{i=0}^\ell \tilde{a}_i t^i$ is a real, symmetric univariate polynomial.
\end{proposition}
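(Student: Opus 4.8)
The plan is to reduce the whole statement to two facts that are already essentially available: Rozanov's spectral characterisation of stationary Markovian fields, quoted above, and the one-to-one correspondence between polynomial Fourier multipliers and constant-coefficient differential operators that was used to introduce the precision operator $Q$. Throughout I would take the test functions $f$ in the Schwartz class $\mathcal{S}(\mathbb{R}^d)$, on which the Fourier transform is an isomorphism and on which $C$ and $Q$ genuinely invert one another, so that all the operator identities below reduce to identities between Fourier symbols.

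For the first equivalence, suppose $x(s)$ is Markovian. By Rozanov's theorem its spectrum is $R(\mm{k}) = 1/p(\mm{k})$ for a positive symmetric polynomial $p$, so $C$ acts on the Fourier side by multiplying $\hat f(\mm{k})$ by $1/p(\mm{k})$, and hence $Q = C^{-1}$ acts by multiplying by $p(\mm{k})$. Writing $p(\mm{k}) = \sum_{|\mm{i}|\le\ell} a_{\mm{i}}\mm{k}^{\mm{i}}$, using that $D^{\mm{i}}$ has Fourier symbol $\mm{k}^{\mm{i}}$ whenever $|\mm{i}|$ is even, and noting that symmetry of $p$ forces every $\mm{i}$ occurring in the sum to have even order, one obtains $Q[f(\cdot)](\mm{h}) = \sum_{|\mm{i}|\le\ell} a_{\mm{i}} D^{\mm{i}} f(\mm{h})$, with the $a_{\mm{i}}$ real since $p$ is real. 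Conversely, if $Q$ has this form then its Fourier symbol is the real symmetric polynomial $p(\mm{k}) = \sum_{|\mm{i}|\le\ell} a_{\mm{i}}\mm{k}^{\mm{i}}$; for $C = Q^{-1}$ to be the covariance operator of a genuine random field one needs $p$ positive (real and symmetric alone does not suffice), and then $R(\mm{k}) = 1/p(\mm{k})$ is a spectral density of the reciprocal-polynomial type, so Rozanov's theorem returns that $x(s)$ is Markovian.

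For the isotropic refinement I would begin from the fact that $x(s)$ is isotropic precisely when its covariance function, hence its spectrum $R(\mm{k})$, is invariant under the orthogonal group $O(d)$, that is, depends on $\mm{k}$ only through $|\mm{k}|$. In the Markovian case this forces $p(\mm{k}) = 1/R(\mm{k})$ to be an $O(d)$-invariant polynomial, and a standard fact from invariant theory is that every such polynomial is a polynomial in $|\mm{k}|^2 = k_1^2 + \cdots + k_d^2$; write it as $p(\mm{k}) = \tilde p(|\mm{k}|^2)$ with $\tilde p$ a real univariate polynomial. Since $-\Delta$ has Fourier symbol $|\mm{k}|^2$, the operator with symbol $\tilde p(|\mm{k}|^2)$ is exactly $\tilde p(-\Delta) = \sum_{i=0}^\ell \tilde a_i (-\Delta)^i$, which is the claimed form of $Q$. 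The converse runs backwards: if $Q = \sum_{i=0}^\ell \tilde a_i (-\Delta)^i$ its symbol $\tilde p(|\mm{k}|^2)$ is radial, hence $R$ and $c$ are radial, hence $x(s)$ is isotropic.

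The main obstacle is not any single computation but the functional-analytic bookkeeping needed to make ``$C$ has an inverse $Q$ of differential-operator form'' precise: one must pin down the class of fields allowed (if $\deg p \le d$ then $1/p \notin L^1(\mathbb{R}^d)$ and $c$ exists only as a generalised or intrinsic covariance, so the statement must then be read in the sense of generalised random fields), specify the domain on which $Q$ and $C$ are defined and genuinely invert each other, and keep track of the parity point above (symmetry of $p$ is what makes the differential-operator coefficients real and reconciles them with the factor $i^{|\mm{i}|}$ in $D^{\mm{i}}$). The only genuinely non-routine ingredient beyond Rozanov's theorem is the invariant-theory lemma that rotation-invariant polynomials are polynomials in $|\mm{k}|^2$; everything else is Fourier-multiplier algebra.
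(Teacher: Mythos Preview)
Your proposal is correct and follows essentially the same route as the paper: the paper does not give a formal proof but states the proposition as a summary of the preceding discussion, which is precisely the Rozanov spectral characterisation combined with the Fourier correspondence between polynomial multipliers and constant-coefficient differential operators. The isotropic refinement is asserted without argument in the paper, so your invariant-theory step (rotation-invariant polynomials are polynomials in $|\mm{k}|^2$, hence $Q$ is a polynomial in $-\Delta$) supplies exactly the missing piece in the natural way.
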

%Things are much easier when we limit ourselves to isotropic fields, in which case we only need to consider univariate polynomials of the form $ p(|\mm{k}|^2) = \sum_{i=0}^\ell a_i |k|^{2i}$ and 
%$Q [f(\cdot)](\mm{h})  = \sum_{|\mm{i}| \leq \ell} a_i (-\Delta)^i f(h)$, where $\Delta = \sum_{i=1}^d \frac{\partial^2}{\partial h_i^2}$ is the $d$-dimensional Laplacian.

The critical point of the above paragraph is that for Markovian Gaussian random fields, the covariance is the inverse of a \emph{local} operator, in the sense that the value of $Q [f(\cdot)](\mm{h})$ only depends on the value of $f(\mm{h})$ in an infinitesimal neighbourhood of $\mm{h}$.  This is in stark contrast to the covariance operator, which is an integral operator and therefore depends on the value of $f(\mm{h})$ everywhere that the covariance function is non-zero.  It is this locality that will lead us to GMRFs and the promised land of sparse precision matrices and fast computations.

\subsection{ Stochastic differential equations and Mat\'{e}rn fields}
The discussion in the previous subsection gave a very general form of Markovian Gaussian random fields.  In this section we will, for the sake of sanity, simplify the setting greatly.  Consider the stochastic partial differential (SPDE)
\begin{equation} \label{spde}
(\kappa^2 - \Delta)^{\alpha/2} x(s) = W(s),
\end{equation} where $W(s)$ is Gaussian white noise and $\alpha > 0$.  The solution to the SPDE will be a Gaussian random
field and some formal manipulations \citep[made precise by][]{Rozanov1977} show that the precision operator is
\begin{align*}
Q &= \left[ E(x x^*)\right]^{-1}= \left[(\kappa^2 - \Delta)^{-\alpha/2} E(W W^*) (\kappa^2 - \Delta)^{-\alpha/2} \right]^{-1} \\
&= (\kappa^2 - \Delta)^{\alpha}.
\end{align*} It follows that $Q$ is a local differential operator when $\alpha$ is an integer and, by Proposition \ref{prop:markov}, the stationary solution to \eqref{spde} for integer $\alpha$ is a Markovian Gaussian random field.  

Somewhat surprisingly, we have now ventured back into the more common parts of spatial statistics: \citet{art246,art455} showed that, for any $\alpha > d/2$, the solution to \eqref{spde} has a Mat\'{e}rn covariance function.  That Mat\'{e}rn family of covariance functions, which is given by $$
c_{\sigma^2,\nu,\kappa} (\mm{h}) = \frac{\sigma^2}{ 2^{\nu -1} \Gamma(\nu) } (\kappa \norm{\mm{h}})^\nu K_\nu (\kappa \norm{\mm{h}}),
$$
where $\sigma^2$ is the variance parameter, $\kappa$ controls the range, and $\nu = \alpha - d/2>0$ is the shape parameter,  is one of the most widely used families of   stationary, isotropic covariance functions.  The results of the previous section show that, when $\nu + d/2$ is an integer, the Mat\'{e}rn fields are Markovian.

\subsection{Approximating Gaussian random fields: the finite element method}

\begin{figure}[ht]
\subfigure[A continuous function]{
  \centering
	\includegraphics[width=0.45\textwidth]{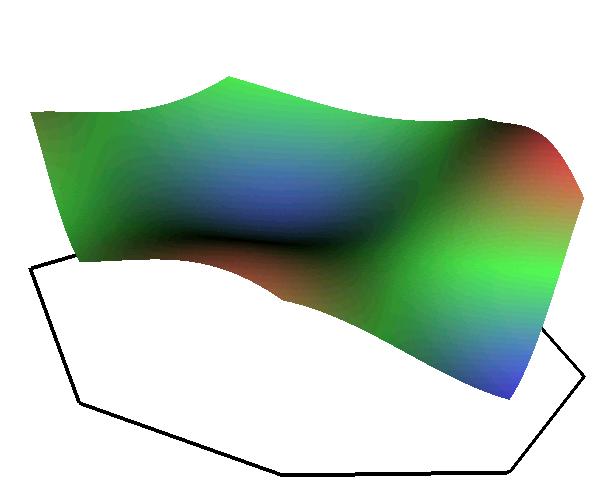}
}
\subfigure[A piecewise linear approximation]{
  \centering
	\includegraphics[width=0.45\textwidth]{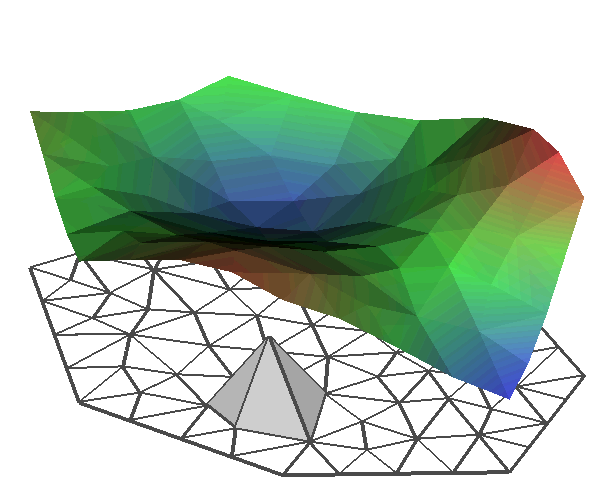}

}
\caption{Piecewise linear approximation of a function over a triangulated mesh. \label{fig:approx}}
\end{figure}

Having now laid all of the theoretical groundwork, we can consider the
fundamental question in this paper: how do can we use GMRFs to approximate a Gaussian
random field?  In order to construct a good approximation, it is vital
to remember that every realisation of a Gaussian random field is a
\emph{function}.  It follows that any sensible method for
approximating  Gaussian random fields is necessarily connected to a
method for approximating an appropriate class of deterministic functions.  Therefore, following \citet{Lindgren2011}, we are on the hunt for simple methods for approximating functions.  

A reasonably simple method for approximating continuous functions is demonstrated in Figure \ref{fig:approx}, which shows a piecewise linear approximation to a deterministic function $f(s)$ that is defined over a triangulation of the domain.  This approximation is of the form $$
f(s) \approx f_h(s) = \sum_{i=1}^n w_i \phi_i(s),
$$ where the basis function $\phi_i(s)$ is the piecewise linear function that is equal to one at the $i$th vertex of the mesh and is zero at all other vertices and the subscript $h$ denotes the largest triangle edge in the mesh and is used to differentiate between piecewise linear functions over a mesh and general functions.  The grey pyramid in Figure \ref{fig:approx} is an example of a basis function.   With this in hand, we can define a piecewise linear Gaussian random field as 
\begin{equation} \label{basis_expansion}
x_h(s) = \sum_{i=1}^n w_i \phi_i(s),
\end{equation} where $\phi_i(s)$ are as before and the weights $\mm{w}$ are now jointly a Gaussian random vector.

Our aim is now to find the statistical properties of $\mm{w}$ that make $x_h(s)$ approximate the Markovian Mat\'{e}rn fields.  In order to do this, we use the characterisation of a Mat\'{e}rn field as the stationary solution to \eqref{spde}.  For simplicity, let us consider $\alpha =2$ on a domain $\Omega \subset \mathbb{R}^2$.   Any solution of \eqref{spde} also satisfies, for any suitable function $\psi(s)$, 
\begin{align}
\int_\Omega \psi(s) (\kappa^2 - \Delta)x(s)\,ds &= \int_\Omega \psi(s) \, W(ds) \notag \\
\intertext{and an application of Green's formula leads to}
\int_\Omega \kappa^2 \psi(s) x(s) + \nabla \psi(s) \cdot \nabla x(s)\, dx &= \int_\Omega \psi(s) \, W(ds), \label{weak}
\end{align}
where the integrals on the right hand sides are integrals with respect
to white nose \citep[see Chapter  5 of][]{book104} and the second line
follows from Green's formula and the (new) condition that the normal
derivative of $x(s)$ vanishes on the boundary of $\Omega$.
Furthermore, if we find $x(s)$ such that \eqref{weak} holds for
\emph{any} sensible $\psi(s)$, then $x(s)$ is the weak solution to \eqref{spde} and it can be shown that it is a Gaussian random field with the appropriate Mat\'{e}rn covariance function. 

Unfortunately, we cannot test \eqref{weak} against every function $\psi(s)$, so we will instead chose a finite set $\{\psi_j(s)\}_{j=1}^n$ to test against.  Substituting $x_h(s)$ into \eqref{weak} and testing against this set of $\psi_j$s, we get the system of linear equations 
\begin{equation}\label{discrete}
\sum_{i=1}^n \left(\kappa^2 \int_\Omega \psi_j(s)\phi_i(s)\,ds +
  \int_\Omega \nabla\psi_j(s) \cdot \nabla \phi_i(s)\,ds \right)w_i=
\int_\Omega \psi_j(s)\,dW(s), \qquad j = 1,\dots, n.
\end{equation}  Finally, we chose our test functions $\psi_j(s)$ to be
the same as our basis functions $\phi_i(s)$ and arrive at the Galerkin
finite element method.   For piecewise linear functions over a
triangular mesh, it is easy to compute all of the integrals on the
left hand side of \eqref{discrete}.  The white noise integral on the
right hand side can be computed and it is Gaussian with mean zero and 
\begin{equation} \label{white_noise}
\operatorname{Cov}\left( \int_\Omega \phi_i(s)\,dW(s),\int_\Omega \phi_j(s)\,dW(s)\right) = \int_\Omega \phi_i(s)\phi_j(s)\,ds.
\end{equation}
We can, therefore write \eqref{discrete} in matrix form as $$
\mm{K}\tilde{\mm{w}} \sim N( \mm{0}, \tilde{\mm{C}}),
$$ where $\mm{K} = \kappa \tilde{\mm{C}} + \mm{G}$ and the matrices are given by $\tilde{C}_{ij} =  \int_\Omega \phi_i(s)\phi_j(s)\,ds$ and ${G}_{ij} =\int_\Omega \nabla\psi_j(s) \cdot \nabla \phi_i(s)\,ds $.  

A quick look at the definitions of $\tilde{\mm{C}}$  and $\mm{G}$
shows that, due to the highly local nature of the basis functions,
these matrices are sparse.  Unfortunately,  $\tilde{\mm{C}}^{-1}$ is a
dense matrix and, therefore  $\tilde{\mm{w}}$ will not be a GMRF.
However, replacing  $\tilde{\mm{C}}$
by the diagonal matrix $\mm{C} = \operatorname{diag} (\int_\Omega
\phi_i(s)\,ds, i=1,\ldots, n)$ gives essentially the same result numerically \citep{BolinWavelet} and can be shown not to increase the rate of convergence of the approximation \citep[Appendix C.5 of][]{Lindgren2011}.  Using $\mm{C}$, it follows that the solution to $$
\mm{Kw} \sim N( \mm{0}, \tilde{\mm{C}})
$$ is a GMRF with zero mean and sparse precision matrix $\mm{Q} = \mm{K}^T\mm{CK}$.  The GMRFs that correspond to other integer values of $\alpha$ can be found in \citet{Lindgren2011}.

\subsection{A continuous approximation to a continuous random field}
We have now derived a GMRF $\mm{w}$ from the continuous Mat\'{e}rn
field $x(s)$.  It is, therefore, reasonable to wonder how well
$\mm{w}$ approximates $x(s)$.  \emph{It doesn't!}  The GMRF $\mm{w}$
was defined as the weights of a basis function expansion
\eqref{basis_expansion} and only make sense in this context.  In
particular, $\mm{w}$ is not trying to approximate $x(s)$ at the mesh
vertices.  Instead, the piecewise linear Gaussian random field $x_h(s)
= \sum_{i=1}^n w_i \phi_i(s)$ tries to approximate the true random
field \emph{everywhere}.  Because of the nature of this continuous
approximation,  $x_h(s)$ will necessarily  overestimate the variance at the vertices and underestimate in in the centre of the triangles.

One of the real advantages of using piecewise linear basis functions is that a lot of work has gone into working out their approximation properties \citep{book107}.  We can leverage this information to  get  hard bounds on the convergence of $x_h(s)$ to $x(s)$.   The following theorem is a simple example of this type of result.  It shows that functionals of both the field and its derivative converge to the true functionals and the error is $\mathcal{O}(h)$, where $h$ is the length of the largest edge in the mesh.  The theorem also links the convergence of functionals of the random field to how well the piecewise linear basis functions can approximate functions in the Sobolev space $H^1$, which consists of  square integrable functions $f(s)$ for which $\norm{f}_{H^1}^2 = \kappa^2 \int_\Omega f(s)^2\,ds + \int_\Omega \nabla f(s) \cdot \nabla f(s)\,ds$ is finite.

\begin{theorem} \label{convergence}
Let $L = \kappa^2 - \Delta$.  Then, for any $f \in H^1$, $$
E \left( \int_\Omega f(s) L(x(s) - x_h(s))\,ds \right)^2 \leq c h^2 \norm{f}_{H^1}^2,
$$ where $c$ is a constant and $h$ is the size of the largest triangle edge in the mesh.
\end{theorem}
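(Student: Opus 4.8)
The plan is to rewrite the random functional in its natural weak form, collapse it to a deterministic approximation error by Galerkin orthogonality, and then close the estimate with classical finite element theory. Since $L=\kappa^2-\Delta$ with the vanishing normal derivative condition, the relevant bilinear form is $a(u,v):=\kappa^2\int_\Omega uv\,ds+\int_\Omega\nabla u\cdot\nabla v\,ds$, which is exactly the $H^1$ inner product, so $a(u,u)=\norm{u}_{H^1}^2$. Using $Lx=W$ and Green's formula I would interpret the quantity in the theorem as the random variable $e(f):=\int_\Omega f\,dW-a(f,x_h)$; this is well defined since $f\in H^1$ and $x_h\in V_h\subset H^1$, where $V_h=\operatorname{span}\{\phi_i\}_{i=1}^n$, and it coincides with $\int_\Omega fL(x-x_h)\,ds$ whenever the latter is classically meaningful. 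This reinterpretation is essential: the exact Mat\'ern field for $\alpha=2$ in two dimensions has sample paths only in $H^{1-\epsilon}$, so $x$, and hence $x-x_h$, does \emph{not} lie in $H^1$ and a direct C\'ea-type bound is unavailable. I would also record the white-noise isometry, an immediate extension of \eqref{white_noise}: $E\big[(\int_\Omega g\,dW)^2\big]=\norm{g}_{L^2}^2$ for $g\in L^2(\Omega)$.

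The key step is to introduce the $a$-orthogonal (equivalently $H^1$-orthogonal) projection $P_h\colon H^1\to V_h$, characterised by $a(f-P_hf,\psi_h)=0$ for all $\psi_h\in V_h$. Since $x_h\in V_h$ this gives $a(f,x_h)=a(P_hf,x_h)$. On the other hand, the discrete equation \eqref{discrete} with test functions equal to the basis functions states precisely that $a(x_h,\psi_h)=\int_\Omega\psi_h\,dW$ for every $\psi_h\in V_h$; taking $\psi_h=P_hf$ and using symmetry of $a$ yields $a(f,x_h)=a(P_hf,x_h)=\int_\Omega P_hf\,dW$. Hence $e(f)=\int_\Omega(f-P_hf)\,dW$, and the isometry gives $E[e(f)^2]=\norm{f-P_hf}_{L^2}^2$. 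If one uses the mass-lumped matrix $\mm{C}$ in place of $\tilde{\mm{C}}$, as recommended above, then $x_h$ solves a bilinear form that differs from $a$ by a perturbation of relative size $\mathcal{O}(h^2)$, which is absorbed into the final bound without affecting the rate.

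It remains to show $\norm{f-P_hf}_{L^2}\le c\,h\,\norm{f}_{H^1}$. Since $P_h$ is the $H^1$ projection, quasi-optimality gives $\norm{f-P_hf}_{H^1}\le\inf_{v\in V_h}\norm{f-v}_{H^1}\le\norm{f}_{H^1}$. For the extra power of $h$ I would run the Aubin--Nitsche duality argument: with $e:=f-P_hf$, let $w\in H^1$ solve $a(v,w)=(v,e)_{L^2}$ for all $v\in H^1$, so that by elliptic regularity on the (assumed convex or smooth) bounded domain $\Omega$ one has $w\in H^2$ with $\norm{w}_{H^2}\le c\,\norm{e}_{L^2}$; then, using Galerkin orthogonality $a(e,I_hw)=0$ and the standard piecewise linear interpolation estimate $\norm{w-I_hw}_{H^1}\le c\,h\,\norm{w}_{H^2}$,
\[
\norm{e}_{L^2}^2=a(e,w)=a(e,w-I_hw)\le\norm{e}_{H^1}\,\norm{w-I_hw}_{H^1}\le c\,h\,\norm{e}_{H^1}\,\norm{e}_{L^2}.
\]
Dividing by $\norm{e}_{L^2}$ and combining with the $H^1$ bound gives $\norm{f-P_hf}_{L^2}\le c\,h\,\norm{f}_{H^1}$, whence $E[e(f)^2]\le c^2h^2\norm{f}_{H^1}^2$, as claimed.

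The interpolation and duality estimates are textbook for piecewise linear elements \citep{book107}, so they are not the hard part. The genuinely delicate point is the first paragraph: correctly interpreting $\int_\Omega fL(x-x_h)\,ds$ as a bona fide random variable despite the low regularity of $x$, and justifying that \eqref{discrete} delivers the clean identity $a(x_h,\psi_h)=\int_\Omega\psi_h\,dW$ (together with the mass-lumping caveat). The duality step also quietly requires $H^2$ elliptic regularity of $\Omega$, which should be stated as a hypothesis.
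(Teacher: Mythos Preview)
Your argument is essentially the paper's own proof: introduce the $H^1$-projection $P_hf$ (the paper calls it $f_h$), use $a(f-P_hf,x_h)=0$ together with the discrete equation to collapse the functional to $\int_\Omega(f-P_hf)\,dW$, apply the white-noise isometry, and finish with the $L^2$ bound $\norm{f-P_hf}_{L^2}\le c\,h\,\norm{f}_{H^1}$. The only difference is that you spell out the Aubin--Nitsche duality argument explicitly (and flag the regularity and mass-lumping caveats), whereas the paper simply cites the needed estimate from \citet{book107}.
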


\begin{proof}
Let $f \in {H}^1$, $f_h(s)$ be the $H^1$--projection of $f$ onto the
finite element space $V_h =
\operatorname{span}\{\phi_1(s),\ldots,\phi_n(s)\}$, that is let $f_h$
be the solution to $$
\min_{f_h \in V_h} \norm{f - f_h}_{H^1}.
$$ It follows that 
\begin{align*}
 \int_\Omega f(s) L x_h(s)\,ds &=  \int_\Omega (f(s) - f_h(s)) L x_h(s)\,ds +  \int_\Omega f_h(s) L x_h(s)\,ds\\
		&= \int_\Omega f_h(s) L x_h(s)\,dx  \\
		&=\int_\Omega f_h(s)\,dW(s),
\end{align*}
where the second equality follows from the Galerkin property of $x_h(s)$ \citep{book107}, which states that  $ \int_\Omega g(s) L x_h(s)\,ds = 0$ whenever $g(s)$ is in the orthogonal complement of $V_h$ with respect to the $H^1$ inner product.  It follows directly that $$
\int_\Omega f(s) L(x(s) - x_h(s))\,ds =   \int_\Omega (f(s) - f_h(s))\,dW(s) .
$$ Therefore, it follows from the properties of white noise integrals that
\begin{align*}
E \left( \int_\Omega f(s) L(x(s) - x_h(s))\,ds \right)^2&= E \left( \int_\Omega(f(s) - f_h(s))\,dW(s) \right)^2\\
&= \int_\Omega(f(s) - f_h(s))^2\,ds.
\end{align*}
  It follows from Theorem 4.4.20 in \citet{book107} that  (under some suitable assumptions on the triangulation) $$
\norm{f_n - f}_{L^2(\Omega)} \leq c h \norm{f}_{H^1}.
$$ 
\end{proof}
The key lesson from the proof of Theorem \ref{convergence} is that the
convergence of functionals of $x_h(s)$ depends solely on how well the
basis functions can approximate a fixed $H^1$ function.  Therefore, it
is vital to consider the approximation properties of your basis functions!

\subsection{Choosing basis functions: don't forget about $\mm{A}$} \label{sec:A_matrix}

While we have computed everything with respect to piecewise linear functions, the methods considered above work for any set of test and basis functions for which all of the computations make sense.  However, we strongly warn against using any of the more esoteric choices.  There are two main issues that can appear.  The first issue is that the wrong choice of basis functions will destroy the Markov structure of the posterior model, which will annihilate the computational gains we have worked so hard for.  The second issue, which is related to Theorem \ref{convergence} is much more problematic: not all sets of basis functions will provide good approximations to $x(s)$.

In Section \ref{sec:conditioning}, we looked at the GMRF computations for hierarchical Gaussian models.  Consider a simple Gaussian observation process of the form $$
y_i \sim N( x_h(s_i), \sigma^2), \qquad i = 1,\ldots,N
$$ where the number of data points $N$ does not need to be related to the number of mesh vertices $n$.  It follows that the datapoint $(s_i,y_i)$ requires the computation of the sum $\sum_{j=1}^n w_j \phi_j(s_i)$.  When the basis functions are local, there are only a few non-zero terms in this sum and the corresponding matrix $\mm{A}$, which has entries $A_{ij} = \phi_j(s_i)$,  is sparse.  For piecewise linear functions on a triangular mesh, each row of $\mm{A}$ has at most $3$ non-zero entries.  

On the other hand, consider a basis consisting of  the first $n$ functions from the  Karhunen-Lo\'{e}ve expansion of $x(s)$.  In this case, it's easy to show that the precision matrix for $\mm{w}$ will be diagonal.  Unfortunately, these basis function are usually non-zero everywhere, so $\mm{A}$ will be a completely dense $N \times n$ matrix, which, for large datasets  will become the dominant computational cost.

\begin{figure}[tp]
  \centering
	\includegraphics[width=0.7\textwidth]{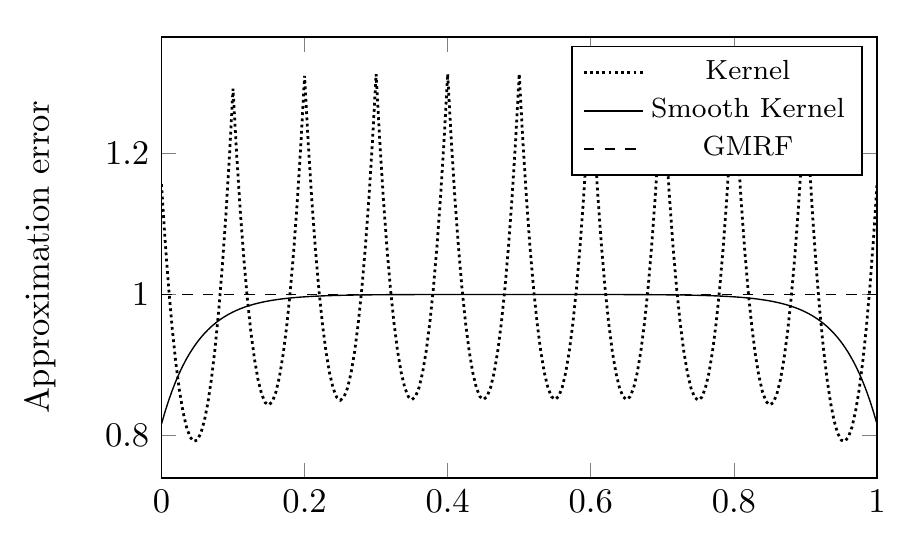}
	\caption{Taken from \citep{Simpson2011}.  The error in the best approximation to $x(s) =1$, $s\in [0,1]$. The dotted line is the na\"ive kernel basis, while the solid line is an integrated kernel basis derived by \citet{Simpson2011}.  The piecewise linear basis (dashed line) reproduces the function exactly.\label{kernel}}
\end{figure}
The approximation properties of sets of basis functions are typically
very hard to determine.  There are, however, some good guiding
principles.  The first principle is that your basis functions should
do a good job approximating simple functions.  You usually want to be
able to at least approximate constant and linear functions well.   A
second guiding principle is that you should be very careful when your
basis functions depend on a parameter that is being estimated.  It is
important to check that the approximation is still sensible over the entire parameter range.   Figure \ref{kernel} is an example of this problem, taken from \citet{Simpson2011}, shows the best approximation to a constant function using the basis derived from a convolution kernel approximation to a one dimensional Mat\'{e}rn field with $\nu = 3/2$ and $\kappa = 20$.  The basis functions are computed with respect to a fixed grid of $11$ equally spaced knots $s_i$, $i=1,\dots,11$ and are given by $$
\phi_i(s) = \frac{1}{(2\pi)^{1/2}\kappa}\exp(-\kappa|s - s_i|).
$$ When $\kappa$ gets large, the basis functions get sharper and the
approximation gets worse.  At its worst, the effective range of the
kernel becomes shorter than grid of knots.  Therefore, if $\kappa$
becomes large relative to the grid spacing, both Kriging estimates and
variance estimates will be badly infected by the poor choice of basis
function.  On the other hand, it can be shown that piecewise linear
Galerkin finite element approximations are stable in the sense that
the norm of the approximation can be bounded above by an appropriate
norm of the true solution.  This means that a piecewise linear basis, when used in this way, 
will \emph{never} display this bad behaviour.

Another consideration when choosing sets of basis functions is the smoothness of the random field.  Theorem \ref{convergence} directly links the convergence of the approximate fields to how well the basis functions can approximate a certain class of functions.  This will usually be the case.  Typically, the smoother the field is, the more useful higher order ``spectral'' bases will be.  For a smooth enough field, it may actually be cheaper to use a small, well chosen global basis  than a large basis full of  local functions.

\subsection{Extending the models: anisotropy, drift, and space-time models on manifolds}

\begin{figure}[tp]
\subfigure[A sample from an anisotropic field]{
  \centering
  \includegraphics[width=0.45\textwidth,angle=0]{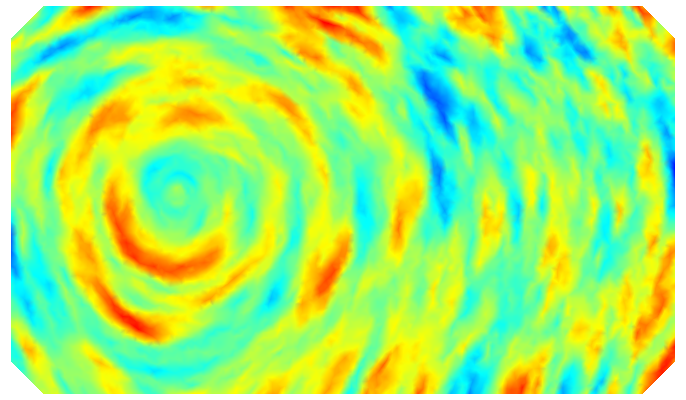}
}
\subfigure[The covariance functions]{
  \centering
	\includegraphics[width=0.45\textwidth,angle=0]{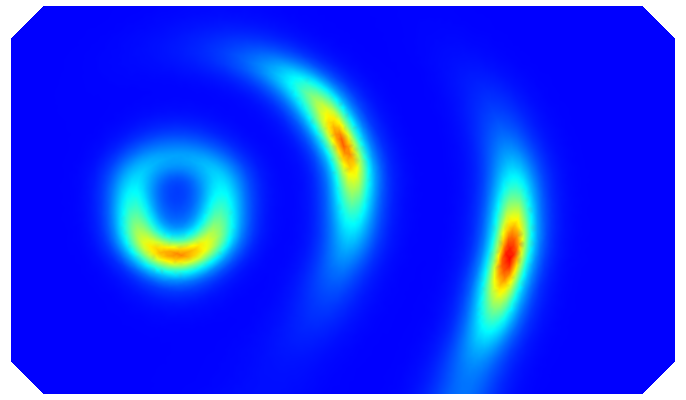}
}

	\caption{An anisotropic field defined through \eqref{spde}
          when the Laplacian is replace with the anisotropic variant $\nabla
    \cdot(\mm{D}(s) \nabla (\tau x(s)))$.  The right hand figure shows
    the approximate covariance function evaluated at three points and
    the non-stationarity is evident.\label{anisotropic}}
\end{figure}

One of the main advantages to the SPDE formulation is that it is easy
to construct non-stationary variants. Non-stationary fields can now be
defined by letting the parameters in~\eqref{spde}, be
space-dependent. For example, $\log\kappa$ can be expanded using a few 
weighted smooth basis functions
\begin{equation}\label{eq8}%
    \log \kappa(\mm{s}) = \sum_{i} \beta_{i} b_{i}(\mm{s})
\end{equation}    
and similar expansions can be used for $\tau$.  This extension requires only minimal
changes to the method used in the stationary case.  More
interestingly, we can also incorporate models of spatially varying
anisotropy by replacing the Laplacian $\Delta$ in \eqref{spde} with
the more general operator
\begin{displaymath}
    \nabla
    \cdot(\mm{D}(s) \nabla (\tau x(s))) \equiv \sum_{i,j=1}^2
    \frac{\partial}{\partial s_i} \left( D_{ij}(s) \frac{\partial}{\partial
          s_j} (\tau(s) x(s))\right).
\end{displaymath}
These models can be extended even farther by incorporating a ``drift''
term, as well as temporal dependence, which leads to the general model
\begin{equation}\label{eq1}%
    \frac{\partial}{\partial t} (\tau
    u(s,t)) + (\kappa^2(\tau x(s,t)) - \nabla \cdot(\mm{D}\nabla (\tau
    x(s,t)))) + \mm{b} \cdot \nabla (\tau x(s,t)) = W(s,t),
\end{equation}
where $t$ is the time variable and $\mm{b}$ is a vector describing the
direction of the drift and the dependence of $\kappa$, $\tau$,
$\mm{D}$ and $\mm{b}$ on $s$ and $t$ has been suppressed.  The concepts behind the construction of Markovian approximations to the stationary SPDE transfer almost completely to this general setting, however more care needs to be taken with the exact form of the discretisation \citep{FuglstadSpaceTime}.

A strong advantage of defining non-stationarity through \eqref{eq1} is that the underlying physics  is well understood.  For example the second order term $\nabla \cdot(\mm{D}\nabla (\tau
    x(s,t))))$ describes the diffusion, where the matrix $\mm{D}(s,t)$
    describes the preferred direction of diffusion at location $s$ and
    time $t$. A spatial random field with a non-constant diffusion
    matrix $\mm{D}(s)$ is shown  in Figure \ref{anisotropic}.
    Similarly, the first order term $\mm{b} \cdot \nabla (\tau
    x(s,t))$ describes an advection effect and $\mm{b}(s,t)$ can be
    interpreted as the direction of the forcing field.  We note that
    unlike other methods for modelling non-stationarity, the
    parametrisation  in this model is \emph{unconstrained}.  This is
    in contrast to the deformation methods of~\citet{art244}, in which
    the deformation must be constrained to be one-to-one.  Initial
    work on parameterising and performing inference on these models
    has been undertaken by \citet{FuglstadAnisotropic}.

\begin{figure}[tp]
  \centering
	\includegraphics[width=0.7\textwidth]{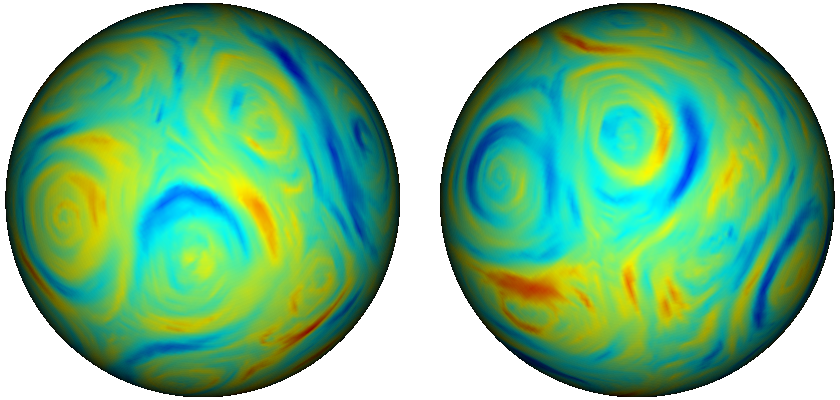}
	\caption{A sample from a non-stationary, anisotropic random
          field on the sphere.\label{sphere}}
\end{figure}

An interesting  consequence of defining our models through \emph{local} stochastic partial differential equations, such as \eqref{eq1}, is that the SPDEs still make sense when $\mathbb{R}^d$ is replaced by a space that is only locally flat.   We can, therefore, use  \eqref{eq1} to \emph{define}
non-stationary Gaussian fields on manifolds, and still obtain a GMRF
representation.  Furthermore, the computations can be done in essentially the same way,
the only change is that the Gaussian noise is now a Gaussian random
measure, and that we need to take into account the local curvature
when computing the integrals to obtain the solution.
Figure~\ref{sphere} shows a realisation of a
non-stationary Gaussian random field on a sphere, with a model similar
to the one used in Figure~\ref{anisotropic}. The solution is
still explicit, so all elements of the precision matrix, for a fixed
triangularisation, can be directly computed with no extra cost.  The
ability to construct computationally efficient representations of
non-stationary random fields on a manifold is important, for example,
when modelling environmental data on the sphere.

\subsection{Further extensions: areal data, multivariate random fields and non-integer $\alpha$s}

The combination of a continuously specified approximate random field and a Markovian structure allows for the straightforward assimilation of areal and point-referenced data.  The trick is to utilise the matrix $\mm{A}$ in the observation equation \eqref{observation}.  The link between the latent GMRF and the point-referenced data is as described above.  For the areal data, the dependence is through integrals of the random field and these integrals can be computed exactly for $x_h(s)$ and the resulting sums go into $\mm{A}$.  If a more complicated  functional of the random field has been observed, this can be approximated using numerical quadrature.  This has been recently applied by \citet{SimpsonLGCP} to inferring log-Gaussian point processes. 

It is also possible to extend the SPDE framework considered above to a multivariate setting.  Primarily, the idea is to replace a single SPDE with a system of SPDEs.  It can be shown (work in progress) that these models overlap  with, but are not equivalent to, the multivariate Mat\'{e}rn models constructed by \citet{gneiting2010matern}.

The only Markovian Mat\'{e}rn models are those where the smoothing parameter is $d/2$ less than an integer.  It turns out, however, that  we can construct good Markov random field approximations when $\alpha$ isn't an integer \citep[see the authors' response in][]{Lindgren2011}.  This is essentially a powerful extension of the GMRF approximations of \citet{art197}.  We are currently working on approximating non-Mat\'{e}rn random fields using this method.

\section{Conclusion}
In this paper we have surveyed the deep and fascinating link between
the continuous Markov property and computationally efficient
inference.  The material in this paper barely scratches the surface
of the questions, applications and oddities of these fields (for
instance, \citet{BolinNonGaussian}  uses a similar construction for non-Gaussian noise processes).  However, we have demonstrated that by carefully considering the Markov property, we are able to greatly boost our modelling capabilities while at the same time ensuring that the models are fast to compute with.
The combination of flexible modelling and fast computations ensures that investigations into Markovian random fields will continue to produce interesting and useful results well into the future.

\bibliographystyle{abbrvnat}
%Use the bibiliography from the point process paper - there is a soft link in the dropbox file
{\bibliography{merged}}
\end{document}